  \newtheorem{defi}{Definition}[section]
  \newtheorem{rem}[defi]{Remark}
  \newtheorem{exas}[defi]{Examples}}
  \newtheorem{lem}[defi]{Lemma}
  \newtheorem{thm}[defi]{Theorem}
\newcommand{\CC}{{\mathbb C}}
\newcommand{\PP}{{\mathbb P}}
\newcommand{\ZZ}{{\mathbb Z}}
\newcommand{\cG}{{\mathcal G}}
\newcommand{\Aut}{{\mathrm{Aut}}}
\newcommand{\End}{{\mathrm{End}}}
\newcommand{\Char}{{\mathrm{char}}\,}
\newcommand{\id}{{\mathrm{id}}}
\newcommand{\dis}
                 {{\mathrel{\scriptstyle{\triangle}}}}
\newcommand{\GL}{{\mathrm{GL}}}
\newcommand{\GE}{{\mathrm{GE}}}
\newcommand{\E}{{\mathrm{E}}}
\newcommand{\dist}{{\mathrm{dist}}}
\newcommand{\diag}{{\mathrm{diag}}}
\let\phi=\varphi
\let\theta=\vartheta
\newcommand{\DelimArray}[4]{\left#1\begin{array}{*{#3}{c}}#4\end{array}\right#2}
\newcommand{\SDelimArray}[4]{\hbox{\scriptsize\arraycolsep=.5\arraycolsep
  $\left#1\!\!\begin{array}{*{#3}{c}}#4\end{array}\!\!\right#2$}}
\newcommand{\Mat}{\DelimArray()}
\newcommand{\SMat}{\SDelimArray()}
\newenvironment{proof}
    {\begin{trivlist} \item {\sl Proof:}} 
    {\/ $\square$ \end{trivlist}}
\begin{document}
\title{The Connected Components of the\\ Projective Line over a Ring}
\author{Andrea Blunck\thanks{Supported by a Lise Meitner
 Research Fellowship
of the Austrian Science Fund (FWF), project M574-MAT.} \and {Hans
Havlicek}}
\date{}
\maketitle

\begin{abstract}
\noindent The main result of the present paper is that the projective
line over a ring $R$ is connected with respect to the relation
``distant'' if, and only if, $R$ is a $\GE_2$-ring.

\noindent{\em Mathematics Subject Classification\/} (2000): 51C05,
20H25.
\end{abstract}
\parskip1mm
\parindent0cm
\section{Introduction}
One of the basic notions for the projective line $\PP(R)$ over a ring
$R$ is the relation {\em distant\/} ($\dis$) on the point set.
Non-distant points are also called {\em parallel}. This terminology
goes back to the projective line over the real dual numbers, where
parallel points represent parallel spears of the Euclidean plane
\cite[2.4]{benz-73}.

We say that $\PP(R)$ is {\em connected\/} (with respect to $\dis$) if
the following holds: For any two points $p$ and $q$ there is a finite
sequence of points starting at $p$ and ending at $q$ such that each
point other than $p$ is distant from its predecessor. Otherwise
$\PP(R)$ is said to be {\em disconnected}. For each {\em connected
component\/} a {\em distance function\/} and a {\em diameter\/} (with
respect to $\dis$) can be defined in a natural way.

One aim of the present paper is to characterize those rings $R$ for which
$\PP(R)$ is connected. Here we use certain subgroups of the group $\GL_2(R)$ of
invertible $2\times 2$-matrices over $R$, namely its {\em elementary
subgroup\/} $\E_2(R)$ and the subgroup $\GE_2(R)$ generated by $\E_2(R)$ and
the set of all invertible diagonal matrices. It turns out that $\PP(R)$ is
connected exactly if $R$ is a $\GE_2$-ring, i.e., if $\GE_2(R)=\GL_2(R)$.

Next we turn to the diameter of connected components. We show that
all connected components of $\PP(R)$ share a common diameter.

It is well known that $\PP(R)$ is connected with diameter $\leq 2$ if
$R$ is a ring of stable rank $2$. We give explicit examples of rings
$R$ such that $\PP(R)$ has one of the following properties: $\PP(R)$
is connected with diameter $3$, $\PP(R)$ is connected with diameter
$\infty$, and $\PP(R)$ is disconnected with diameter $\infty$. In
particular, we show that there are {\em chain geometries\/} over
disconnected projective lines.

\section{Preliminaries}\label{se:prelim}

Throughout this paper we shall only consider associative rings with a
unit element $1$, which is inherited by subrings and acts unitally on
modules. The trivial case $1=0$ is not excluded. The group of
invertible elements of a ring $R$ will be denoted by $R^*$.

Firstly, we turn to the projective line over a ring: Consider the
free left $R$-module $R^2$. Its automorphism group is the group
$\GL_2(R)$ of invertible $2\times 2$-matrices with entries in $R$. A
pair $(a,b)\in R^2$ is called {\em admissible}, if there exists a
matrix in $\GL_2(R)$ with $(a,b)$ being its first row. Following
\cite[785]{herz-95}, the {\em projective line over\/} $R$ is the
orbit of the free cyclic submodule $R(1,0)$ under the action of
$\GL_2(R)$. So
\begin{displaymath}
  \PP(R):=R(1,0)^{\GL_2(R)}
\end{displaymath}
or, in other words, $\PP(R)$ is the set of all $p\leq R^2$ such that
$p=R(a,b)$ for an admissible pair $(a,b)\in R^2$. As has been pointed
out in \cite[Proposition 2.1]{blu+h-00a}, in certain cases
$R(x,y)\in\PP(R)$ does not imply the admissibility of $(x,y)\in R^2$.
However, throughout this paper we adopt the convention that points
are represented by admissible pairs only. Two such pairs represent
the same point exactly if they are left-proportional by a unit in
$R$.

The point set $\PP(R)$ is endowed with the symmetric relation {\em
distant\/} ($\dis$) defined by
\begin{equation}\label{def-distant}
  \dis:=(R(1,0), R(0,1))^{\GL_2(R)}.
\end{equation}
Letting $p=R(a,b)$ and $q= R(c,d)$ gives then
\begin{displaymath}
  p\,\dis\, q\iff \Mat2{a&b\\c&d}\in \GL_2(R).
\end{displaymath}
In addition, $\dis$ is anti-reflexive exactly if $1\ne 0$.

The vertices of the {\em distant graph\/} on $\PP(R)$ are the points
of $\PP(R)$, the edges of this graph are the unordered pairs of
distant points. Therefore basic graph-theoretical concepts are at
hand: $\PP(R)$ can be decomposed into {\em connected components\/}
(maximal connected subsets), for each connected component there is a
{\em distance function\/} ($\dist(p,q)$ is the minimal number of
edges needed to go from vertex $p$ to vertex $q$), and each connected
component has a {\em diameter\/} (the supremum of all distances
between its points).

Secondly, we recall that the set of all {\em elementary matrices\/}
\begin{equation}\label{elmatrizen}
  B_{12}(t):=\Mat2{1&t\\0&1} \mbox{ and }
  B_{21}(t):=\Mat2{1&0\\t&1}
  \mbox{ with } t\in R
\end{equation}
generates the {\em elementary subgroup\/} $\E_2(R)$ of $\GL_2(R)$. The group
$\E_2(R)$ is also generated by the set of all matrices
\begin{equation}\label{erzeugende}
  E(t):=\Mat2{t&1\\-1&0}=B_{12}(1)
  \cdot B_{21}(-1)\cdot B_{12}(1)\cdot B_{21}(t)
  \mbox{ with } t\in R,
\end{equation}
since $B_{12}(t)=E(-t)\cdot E(0)^{-1}$ and $B_{21}(t)=E(0)^{-1}\cdot
E(t)$. Further, $E(t)^{-1}=E(0)\cdot E(-t)\cdot E(0)$ implies that
all finite products of matrices $E(t)$ already comprise the group
$\E_2(R)$.

The subgroup of $\GL_2(R)$ which is generated by $\E_2(R)$ and the set of all
invertible diagonal matrices is denoted by $\GE_2(R)$. By definition, a {\em
$\GE_2$-ring\/} is characterized by $\GL_2(R)=\GE_2(R)$; see, among others,
\cite[5]{cohn-66} or \cite[114]{silv-81}.

\section{Connected Components}

We aim at a description of the connected components of the projective
line $\PP(R)$ over a ring $R$. The following lemma, although more or
less trivial, will turn out useful:

\begin{lem}\label{le:matrizen}
Let $X'\in\GL_2(R)$ and suppose that the $2\times 2$-matrix $X$ over
$R$ has the same first row as $X'$.  Then $X$ is invertible if, and
only if, there is a matrix
\begin{equation}\label{su-matrix}
M = \Mat2{1&0\\s&u}\in \GE_2(R)
\end{equation}
such that $X = MX'$.
\end{lem}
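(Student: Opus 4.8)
The plan is to prove the two implications separately; the reverse (``if'') direction is immediate, while all the content sits in the forward (``only if'') direction. For the easy direction, suppose an $M$ of the stated form lies in $\GE_2(R)$ and satisfies $X=MX'$. Since $\GE_2(R)\subseteq\GL_2(R)$ and $X'\in\GL_2(R)$, the product $X=MX'$ is then a product of invertible matrices, hence invertible. (The first-row hypothesis is consistent with this: as the first row of $M$ is $(1,0)$, the first row of $MX'$ equals $(1,0)X'$, which is exactly the first row of $X'$.)

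For the forward direction, assume $X\in\GL_2(R)$ and set $M:=X(X')^{-1}$, so that $M\in\GL_2(R)$ and $X=MX'$ by construction. The first step is to determine the first row of $M$. Writing $e_1=(1,0)$, so that $e_1Y$ is the first row of a matrix $Y$, the relation $X=MX'$ gives $e_1X=(e_1M)X'$; since $X$ and $X'$ have the same first row, $e_1X=e_1X'$, and multiplying on the right by $(X')^{-1}$ yields $e_1M=e_1$. Thus the first row of $M$ is $(1,0)$, i.e.\ $M=\Mat2{1&0\\s&u}$ for suitable $s,u\in R$.

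It remains to upgrade ``$M\in\GL_2(R)$'' to ``$M\in\GE_2(R)$'', and this is the step I expect to be the real obstacle, since the lemma must hold over an arbitrary ring and not merely over a $\GE_2$-ring. I would exploit the special shape of $M$: writing $N=(n_{ij})$ for its inverse, a short computation with $MN=NM=1$ (using that the top row of $M$ is $(1,0)$) forces $n_{12}=0$ and $u\,n_{22}=n_{22}\,u=1$, so that $u\in R^*$. Granting this, the factorization
\[
  M=\Mat2{1&0\\s&u}=\Mat2{1&0\\s&1}\,\Mat2{1&0\\0&u}=B_{21}(s)\cdot\diag(1,u)
\]
writes $M$ as a product of the elementary matrix $B_{21}(s)\in\E_2(R)$ and the invertible diagonal matrix $\diag(1,u)$. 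By the definition of $\GE_2(R)$ as the group generated by $\E_2(R)$ and the invertible diagonal matrices, this gives $M\in\GE_2(R)$ and finishes the proof. The only calculation demanding any care is the verification that $u$ is a unit; once that is in hand, everything is formal.
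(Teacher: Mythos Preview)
Your proof is correct and follows essentially the same approach as the paper: define $M:=XX'^{-1}$, verify its first row is $(1,0)$, and reduce the question to whether $u\in R^*$. The only stylistic difference is that the paper defines $M$ once (without assuming $X$ invertible) and argues the chain of equivalences $X\in\GL_2(R)\iff u\in R^*\iff M\in\GE_2(R)$ in one stroke, whereas you separate the two implications and spell out the unit computation and the factorization $M=B_{21}(s)\cdot\diag(1,u)$ explicitly.
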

\begin{proof}
Given $X'$ and $X$ then $XX'^{-1} = \SMat2{1&0\\s&u}=:M$ for some
$s,u\in R$. Further, $X=MX'$ is invertible exactly if $u\in R^*$.
This in turn is equivalent to (\ref{su-matrix}).
\end{proof}

Here is our main result, where we use the generating matrices of
$\E_2(R)$ introduced in (\ref{erzeugende}).

\begin{thm}\label{th:komponenten}
Denote by $C_\infty$ the connected component of the point $R(1,0)$ in
the projective line\/ $\PP(R)$ over a ring $R$. Then the following
holds:
\begin{enumerate}\itemsep=0cm
  \item
  The group\/ $\GL_2(R)$ acts transitively on the set of connected
  components of $\PP(R)$.
  \item
  Let $t_1, t_2,\ldots,t_{n}\in R$, $n\geq 0$, and put
  \begin{equation}\label{matrix-kette}
 (x,y):=(1,0)\cdot E(t_{n})\cdot E(t_{n-1})
                  \cdots E(t_1).
\end{equation}
Then $R(x,y)\in C_\infty$ and, conversely, each point $r\in C_\infty$
can be written in this way.
  \item
  The stabilizer of $C_\infty$ in\/ $\GL_2(R)$ is the group\/ $\GE_2(R)$.
  \item
  The projective line\/ $\PP(R)$ is connected if, and only if, $R$
is a\/ $\GE_2$-ring.
\end{enumerate}
\end{thm}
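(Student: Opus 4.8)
The plan is to rest the whole argument on two observations. First, since the relation $\dis$ is defined in (\ref{def-distant}) as a $\GL_2(R)$-orbit, right multiplication by any $g\in\GL_2(R)$ is an automorphism of the distant graph; hence it permutes the connected components and preserves the distance function $\dist$. As $\PP(R)=R(1,0)^{\GL_2(R)}$ is a single orbit on points, a graph automorphism carrying $R(1,0)$ to any prescribed point carries $C_\infty$ onto the component of that point, which gives the transitivity in (a). Second, a direct computation shows $R(1,0)\dis R(c,d)$ exactly when $d\in R^*$, and then $R(c,d)=R(d^{-1}c,1)=R\bigl((1,0)\,E(t)\bigr)$ with $t=d^{-1}c$; so the neighbours of the base point are precisely the points $R\bigl((1,0)\,E(t)\bigr)$, $t\in R$.

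For (b) I would show that $C_\infty$ equals the orbit $R(1,0)^{\E_2(R)}$; since every element of $\E_2(R)$ is a finite product of the $E(t)$, this is exactly the claim. The inclusion ``$\supseteq$'' is a walk: with $g_k:=E(t_n)\cdots E(t_{n-k+1})$, consecutive points $R\bigl((1,0)\,g_{k-1}\bigr)$ and $R\bigl((1,0)\,g_k\bigr)$ are distant, because applying the automorphism $g_{k-1}$ reduces this to the neighbour relation $R(1,0)\dis R\bigl((1,0)E(t_{n-k+1})\bigr)$ already noted. The reverse inclusion I would prove by induction on $\dist(R(1,0),r)$: if $q=R\bigl((1,0)\,w\bigr)$ with $w\in\E_2(R)$ and $q'\dis q$, then transporting by $w^{-1}$ gives $q'w^{-1}\dis R(1,0)$, so $q'w^{-1}=R\bigl((1,0)E(s)\bigr)$ for some $s\in R$, whence $q'=R\bigl((1,0)E(s)\,w\bigr)$ lies in the orbit.

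Part (c) is where the real content sits. That $\GE_2(R)$ stabilises $C_\infty$ is checked on generators: $\E_2(R)$ maps the orbit $R(1,0)^{\E_2(R)}=C_\infty$ onto itself, and each invertible diagonal matrix fixes the point $R(1,0)$, hence fixes its component. For the reverse inclusion I would take $g\in\GL_2(R)$ stabilising $C_\infty$, so that $R(1,0)\,g\in C_\infty$; by (b) there are $w\in\E_2(R)$ and $\lambda\in R^*$ with $(1,0)\,g=\lambda\,(1,0)\,w=(1,0)\,\diag(\lambda,1)\,w$. Thus $g$ and the invertible matrix $\diag(\lambda,1)\,w\in\GE_2(R)$ have the same first row, and Lemma~\ref{le:matrizen} furnishes $M=\Mat2{1&0\\s&u}\in\GE_2(R)$ with $g=M\cdot\diag(\lambda,1)\,w$. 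Every factor lies in $\GE_2(R)$, so $g\in\GE_2(R)$. Finally (d) is formal: $\PP(R)$ is connected iff $C_\infty=\PP(R)$, i.e.\ iff every $g\in\GL_2(R)$ sends $R(1,0)$ into $C_\infty$, which by the previous remarks means every $g$ stabilises $C_\infty$; by (c) this is precisely $\GL_2(R)=\GE_2(R)$.

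The hard part will be the bookkeeping forced by non-commutativity. Points are left modules acted on by matrices on the right, so the unit ambiguity in a representative must be absorbed on the correct side (here into $\diag(\lambda,1)$ rather than into $w$), and Lemma~\ref{le:matrizen} must be invoked with $g$ and $\diag(\lambda,1)\,w$ playing the roles of $X$ and $X'$ in the right order. Once the handedness is pinned down, the $\GL_2(R)$-invariance of $\dis$ carries out all the substantive work, and the four parts fall out in the order (a), (b), (c), (d).
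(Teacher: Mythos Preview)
Your overall architecture matches the paper's: (a) from transitivity, (b) identifying $C_\infty$ with the $\E_2(R)$-orbit of $R(1,0)$, (c) via Lemma~\ref{le:matrizen}, and (d) as a formal consequence. Your treatment of (c) and (d) is essentially identical to the paper's. In (b) you replace the paper's recursive normalisation (via Lemma~\ref{le:matrizen}) by a cleaner transport-by-automorphism induction; that is a genuine and pleasant simplification for the ``$\subseteq$'' direction.

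There is, however, a real slip in your ``$\supseteq$'' direction of (b). With $g_k:=E(t_n)\cdots E(t_{n-k+1})$ you have $g_k=g_{k-1}\,E(t_{n-k+1})$, so the new factor sits on the \emph{right}. Applying the automorphism $g_{k-1}^{-1}$ to the pair $\bigl(R((1,0)g_{k-1}),\,R((1,0)g_k)\bigr)$ therefore yields $\bigl(R(1,0),\,R((1,0)\,g_{k-1}E(t_{n-k+1})g_{k-1}^{-1})\bigr)$, \emph{not} $\bigl(R(1,0),\,R((1,0)E(t_{n-k+1}))\bigr)$. In fact your walk need not be a walk at all: over $R=\ZZ$ with $n=2$, $t_2=2$, $t_1=1$ one gets $p_1=R(2,1)$ and $p_2=R(1,2)$, and $\det\SMat2{2&1\\1&2}=3\notin\ZZ^*$, so $p_1\notdis p_2$. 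The fix is exactly the handedness issue you warned about: set instead $h_k:=E(t_k)\cdots E(t_1)$, so that $h_k=E(t_k)\,h_{k-1}$ with the new factor on the \emph{left}; then
\[
\Mat2{(1,0)h_{k-1}\\(1,0)h_k}=\Mat2{1&0\\t_k&1}\,h_{k-1}\in\GL_2(R),
\]
and the transport argument goes through verbatim. (The paper sidesteps this by simply noting that each $E(t)$ sends $R(0,1)\in C_\infty$ to $R(1,0)\in C_\infty$, hence preserves $C_\infty$.) With this correction your proof is complete and, in (b), slightly slicker than the original.
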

\begin{proof}
(a) This is immediate from the fact that the group $\GL_2(R)$ acts
transitively on the point set $\PP(R)$ and preserves the relation
$\dis$.

(b) Every matrix $E(t_i)$ appearing in (\ref{matrix-kette}) maps
$C_\infty$ onto $C_\infty$, since $R(0,1)\in C_\infty$ goes over to
$R(1,0)\in C_\infty$. Therefore $R(x,y)\in C_\infty$.

On the other hand let $r\in C_\infty$. Then there exists a sequence
of points $p_i =R(a_i,b_i)\in\PP(R)$, $i\in\{0,1,\ldots,n\}$, such
that
\begin{equation}\label{dist-kette}
R(1,0)=p_0\,\dis\,p_1\,\dis\ldots\dis\, p_n=r.
\end{equation}
Now the arbitrarily chosen admissible pairs $(a_i,b_i)$ are
``normalized'' recursively as follows: First define
$(x_{-1},y_{-1}):=(0,-1)$ and $(x_0,y_0):=(1,0)$. So
$p_0=R(x_0,y_0)$. Next assume that we already are given admissible
pairs $(x_j,y_j)$ with $p_j=R(x_j,y_j)$ for all
$j\in\{0,1,\ldots,i-1\}$, $1\leq i\leq n$. From Lemma
\ref{le:matrizen}, there are $s_{i}\in R$ and $u_{i}\in R^*$ such
that
\begin{equation}
 \Mat2{x_{i-1}&y_{i-1}\\a_{i}&b_{i}} =
 \Mat2{1&0\\s_{i}&u_{i}}\Mat2{x_{i-1}&y_{i-1}\\-x_{i-2}&-y_{i-2}}.
\end{equation}
By putting $x_{i}:=u_{i}^{-1}a_{i}$, $y_{i}:=u_{i}^{-1}b_{i}$, and
$t_{i}:=u_{i}^{-1}s_{i}$ we get
\begin{equation}\label{rekursion}
 \Mat2{x_{i}&y_{i} \\ -x_{i-1}&-y_{i-1}} =
  E(t_{i})\cdot \Mat2{x_{i-1}&y_{i-1}\\ -x_{i-2}&-y_{i-2}}
\end{equation}
and $p_{i}=R(x_{i},y_{i})$. Therefore, finally, $(x_{n},y_{n})$ is
the first row of the matrix
\begin{equation}\label{produktmatrix}
G':= E(t_n)\cdot E(t_{n-1})\cdots E(t_{1})
 \in\E_2(R),
\end{equation}
and $r=R(x_n,y_n)$.

(c) As has been noticed at the end of Section \ref{se:prelim}, the
set of all matrices (\ref{erzeugende}) generates $\E_2(R)$. This
together with (b) implies that $\E_2(R)$ stabilizes $C_\infty$.
Further, $R(1,0)$ remains fixed under each invertible diagonal
matrix. Therefore $\GE_2(R)$ is contained in the stabilizer of
$C_\infty$.

Conversely, suppose that $G\in\GL_2(R)$ stabilizes $C_\infty$. Then
the first row of $G$, say $(a,b)$, determines a point of $C_\infty$.
By (\ref{matrix-kette}) and (\ref{produktmatrix}), there is a matrix
$G'\in \E_2(R)$ and a unit $u\in R^*$ such that $(a,b)=(1,0)\cdot
(uG')$. Now Lemma \ref{le:matrizen} can be applied to $G$ and
$uG'\in\GE_2(R)$ in order to establish that $G\in\GE_2(R)$.

(d) This follows from (a) and (c).
\end{proof}

From Theorem \ref{th:komponenten} and (\ref{produktmatrix}), the
connected component of $R(1,0)\in\PP(R)$ is given by all pairs of
$(1,0)\cdot\E_2(R)$ or, equivalently, by all pairs of
$(1,0)\cdot\GE_2(R)$. Each product (\ref{matrix-kette}) gives rise to
a sequence
\begin{equation}
  (x_i,y_i)=(1,0)\cdot E(t_i)\cdot E(t_{i-1})\cdots E(t_1),\;
  i\in\{0,1,\ldots,n\},
\end{equation}
which in turn defines a sequence $p_i:=R(x_i,y_i)$ of points with
$p_0=R(1,0)$. By putting $p_n=:r$ and by reversing the arguments in
the proof of (b), it follows that (\ref{dist-kette}) is true. So, if
the diameter of $C_\infty$ is finite, say $m\geq 0$, then in order to
reach all points of $C_\infty$ it is sufficient that $n$ ranges from
$0$ to $m$ in formula (\ref{matrix-kette}).

By the action of $\GL_2(R)$, the connected component $C_p$ of any
point $p\in\PP(R)$ is $\GL_2(R)$-equivalent to the connected
component $C_\infty$ of $R(1,0)$ and the stabilizer of $C_p$ in
$\GL_2(R)$ is conjugate to $\GE_2(R)$. Observe that in general
$\GE_2(R)$ is not normal in $\GL_2(R)$. Cf. the example in
\ref{beispiele2} (c). All connected components are isomorphic
subgraphs of the distant graph.

\section{Generalized Chain Geometries}

If $K\subset R$ is a (not necessarily commutative) subfield, then the
$K$-sublines of $\PP(R)$ give rise to a {\em generalized chain
geometry\/} $\Sigma(K,R)$; see \cite{blu+h-99a}. In contrast to an
ordinary chain geometry (cf.\ \cite{herz-95}) it is not assumed that
$K$ is in the centre of $R$. Any three mutually distant points are on
at least one $K$-chain. Two distinct points are distant exactly if
they are on a common $K$-chain. Therefore each $K$-chain is contained
in a unique connected component. Each connected component $C$
together with the set of $K$-chains entirely contained in it defines
an incidence structure $\Sigma(C)$. It is straightforward to show
that the automorphism group of the incidence structure $\Sigma(K,R)$
is isomorphic to the wreath product of $\Aut\,\Sigma(C)$ with the
symmetric group on the set of all connected components of $\PP(R)$.

If $\Sigma(K,R)$ is a chain geometry then the connected components
are exactly the {\em maximal connected subspaces\/} of $\Sigma(K,R)$
\cite[793, 821]{herz-95}. Cf.\ also \cite{kroll-91} and
\cite{kroll-92}.

An $R$-semilinear bijection of $R^2$ induces an automorphism of
$\Sigma(K,R)$ if, and only if, the accompanying automorphism of $R$
takes $K$ to $u^{-1}Ku$ for some $u\in R^*$. On the other hand, if
$\PP(R)$ is disconnected then we cannot expect all automorphisms of
$\Sigma(K,R)$ to be semilinearly induced. More precisely, we have the
following:

\begin{thm}
Let $\Sigma(K,R)$ be a {\em disconnected\/} generalized chain
geometry, i.e., the projective line $\PP(R)$ over $R$ is
disconnected. Then $\Sigma(K,R)$ admits automorphisms that cannot be
induced by any semilinear bijection of $R^2$.
\end{thm}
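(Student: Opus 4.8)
The plan is to exploit the wreath product description of $\Aut\,\Sigma(K,R)$ recalled just above: since $\PP(R)$ is disconnected there are at least two connected components, and in the base group of the wreath product one may act by a nontrivial automorphism on a single component while fixing every other component pointwise. I would produce one such automorphism and then show directly that no semilinear bijection of $R^2$ can induce it. The whole argument hinges on a rigidity statement whose proof is the real work.

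The crucial step is the following lemma: if an $R$-semilinear bijection $\phi$ of $R^2$ induces an automorphism of $\Sigma(K,R)$ that fixes every point of some connected component, then $\phi$ induces the identity on $\PP(R)$. To prove it I would first reduce to the component $C_\infty$ of $R(1,0)$: by part (a) of Theorem \ref{th:komponenten} the group $\GL_2(R)$ permutes the components transitively, and every element of $\GL_2(R)$ induces an automorphism of $\Sigma(K,R)$ (the criterion $K=u^{-1}Ku$ holds with $u=1$), so conjugating $\phi$ by a suitable linear map carries the fixed component to $C_\infty$ without leaving the class of semilinear maps. The key observation is then that $C_\infty$ contains not only $R(1,0)$ and $R(0,1)$ but also every point $R(1,t)$ with $t\in R$, since $R(1,t)$ is distant from $R(0,1)$ via the elementary matrix $B_{12}(t)$. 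Writing $\phi$ as $v\mapsto\alpha(v)A$ with $A\in\GL_2(R)$ and $\alpha\in\Aut(R)$, the fixed points $R(1,0),R(0,1),R(1,1)$ force $A$ to be a scalar matrix $uI$, and then fixing all $R(1,t)$ forces $\alpha$ to be conjugation by $u$; a short computation with $R(u,\alpha(t)u)=R(1,u^{-1}\alpha(t)u)$ shows that $\phi$ thereafter fixes every $R(a,b)$, i.e.\ induces the identity on $\PP(R)$.

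With the lemma in hand the theorem follows quickly. The components are mutually isomorphic, and at least one of them carries a nontrivial automorphism; for instance $E(0)\in\GE_2(R)$ stabilizes $C_\infty$ and interchanges the distinct points $R(1,0)$ and $R(0,1)$, so $\Aut\,\Sigma(C)\neq\{\id\}$. Hence the wreath product contains an automorphism $g$ that acts as a fixed nontrivial $\tau\in\Aut\,\Sigma(C)$ on one component $C_1$ and as the identity on all remaining components, in particular on $C_\infty$. Thus $g$ fixes $C_\infty$ pointwise, yet $g\neq\id$. If some semilinear bijection $\phi$ induced $g$, then $\phi$ would fix $C_\infty$ pointwise, whence the rigidity lemma would give $g=\id$, a contradiction; therefore $g$ is an automorphism of $\Sigma(K,R)$ that is not induced by any semilinear bijection of $R^2$.

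I expect the rigidity lemma to be the main obstacle, and within it the genuinely delicate point is the bookkeeping with one-sided proportionality: because points are left modules while the matrix $A$ acts on the right, the identities $R(u,\alpha(t)u)=R(1,u^{-1}\alpha(t)u)$ and $R(ua,ub)=R(a,b)$ must be handled carefully to see that fixing all $R(1,t)$ pins $\alpha$ down to an inner automorphism and then collapses $\phi$ to the identity. The remaining ingredients—transitivity of $\GL_2(R)$ on components, the fact that $C_\infty$ contains every $R(1,t)$, and the existence of a nontrivial automorphism of a single component—are routine given the results already established.
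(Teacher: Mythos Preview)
Your proposal is correct and follows essentially the same approach as the paper: a rigidity lemma (a semilinearly induced map determined by its restriction to one component, proved by reducing to $C_\infty$ and using that $R(1,0)$, $R(0,1)$, $R(1,1)$, and all $R(1,t)$ lie there), followed by a piecewise-defined automorphism acting nontrivially on one component and trivially on the rest. The only cosmetic differences are that the paper states rigidity for \emph{two} semilinear maps agreeing on a component, uses the family $R(x,1)$ rather than $R(1,t)$, and places the nontrivial action on $C_\infty$ (via $B_{12}(1)$) rather than on a different component (via a transported copy of $E(0)$); none of these changes the substance of the argument.
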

\begin{proof}
(a) Suppose that two semilinearly induced bijections
$\gamma_1,\gamma_2$ of $\PP(R)$ coincide on all points of one
connected component $C$ of $\PP(R)$. We claim that
$\gamma_1=\gamma_2$. For a proof choose two distant points $R(a,b)$
and $R(c,d)$ in $C$. Also, write $\alpha$ for that projectivity which
is given by the matrix $\SMat2{a&b\\c&d}$. Then
$\beta:=\alpha\gamma_1\gamma_2^{-1}\alpha^{-1}$ is a semilinearly
induced bijection of $\PP(R)$ fixing the connected component
$C_\infty$ of $R(1,0)$ pointwise. Hence $R(1,0)$, $R(0,1)$, and
$R(1,1)$ are invariant under $\beta$, and we get
\begin{displaymath}
   R(x,y)^\beta = R(x^\zeta u,y^\zeta u) \mbox{ for all }(x,y)\in R^2
\end{displaymath}
with $\zeta\in\Aut(R)$ and $u\in R^*$, say. For all $x\in R$ the
point $R(x,1)$ is distant from $R(1,0)$; so it remains fixed under
$\beta$. Therefore $x=u^{-1}x^\zeta u$ or, equivalently, $x^\zeta
u=ux$ for all $x\in R$. Finally, $R(x,y)^\beta = R(u x,u y)=R(x,y)$
for all $(x,y)\in R^2$, whence $\gamma_1=\gamma_2$.

(b) Let $\gamma$ be a non-identical projectivity of $\PP(R)$ given by
a matrix $G\in \GE_2(R)$, for example, $G=B_{12}(1)$. From Theorem
\ref{th:komponenten}, the connected component $C_\infty$ of $R(1,0)$
is invariant under $\gamma$. Then
\begin{equation}\label{def-delta}
  \delta:\PP(R)\to\PP(R):
  \left\{
  \begin{array}{ll}
    p\mapsto p^\gamma &\mbox{for all } p\in C_\infty\\
    p\mapsto p        &\mbox{for all } p\in \PP(R)\setminus C_\infty
  \end{array}
  \right.
\end{equation}
is an automorphism of $\Sigma(K,R)$. The projectivity $\gamma$ and
the identity on $\PP(R)$ are different and both are linearly induced.
The mapping $\delta$ coincides with $\gamma$ on $C_\infty$ and with
the identity on every other connected component. There are at least
two distinct connected components of $\PP(R)$. Hence it follows from
(a) that $\delta$ cannot be semilinearly induced.
\end{proof}
If a cross-ratio in $\PP(R)$ is defined according to
\cite[1.3.5]{herz-95}\ then four points with cross-ratio are
necessarily in a common connected component. Therefore, the
automorphism $\delta$ defined in (\ref{def-delta}) preserves all
cross-ratios. However, cross-ratios are not invariant under $\delta$
if one adopts the definition in \cite[90]{benz-73} or
\cite[7.1]{herz-95} which works for commutative rings only. This is
due to the fact that here four points with cross-ratio can be in two
distinct connected components.

We shall give examples of disconnected (generalized) chain geometries
in the next section.

\section{Examples}

There is a widespread literature on (non-)$\GE_2$-rings. We refer to
\cite{abra-86}, \cite{huah-84}, \cite{cohn-66},  \cite{costa-88},
\cite{dennis-75}, \cite{hahn+om-89}, and  \cite{silv-81}. We are
particularly interested in rings containing a field and the
corresponding generalized chain geometries.

\begin{rem}
Let $R$ be a ring. Then each admissible pair $(x,y)\in R^2$ is {\em
unimodular}, i.e., there exist $x',y'\in R$ with $xx'+yy'=1$. We
remark that
\begin{equation}\label{uni-admiss}
  (x,y)\in R^2 \mbox{ unimodular} \Rightarrow (x,y)\mbox{ admissible}
\end{equation}
is satisfied, in particular, for all {\em commutative\/} rings, since
$xx'+yy'=1$ can be interpreted as the determinant of an invertible
matrix with first row $(x,y)$. Also, all rings of {\em stable rank\/}
$2$ \cite[293]{veld-85} satisfy (\ref{uni-admiss}); cf.\
\cite[2.11]{veld-85}. For example, local rings, matrix rings over
fields, and finite-dimensional algebras over commutative fields are
of stable rank $2$. See \cite[4.1B]{hahn+om-89}, \cite[\S
2]{veld-85}, \cite{veld-95}, and the references given there.

The following example shows that (\ref{uni-admiss}) does not hold for
all rings: Let $R:=K[X,Y]$ be the polynomial ring over a proper skew
field $K$ in independent central indeterminates $X$ and $Y$. There
are $a,b\in K$ with $c:=ab-ba\neq 0$. From
\begin{displaymath}
  (X+a)(Y+b)c^{-1}-(Y+b)(X+a)c^{-1}=1,
\end{displaymath}
the pair $(X+a,-(Y+b))\in R^2$ is unimodular. However, this pair is
not admissible: Assume to the contrary that $(X+a,-(Y+b))$ is the
first row of a matrix $M\in\GL_2(R)$ and suppose that the second
column of $M^{-1}$ is the transpose of $(v_0,w_0)\in R^2$. Then
\begin{displaymath}
  P:= \{(v,w)\in R^2 \mid (X+a)v-(Y+b)w=0\} = (v_0,w_0)R.
\end{displaymath}
On the other hand, by \cite[Proposition 1]{ojan+s-71}, the right
$R$-module $P$ cannot be generated by a single element, which is a
contradiction.
\end{rem}

\begin{exas}\label{beispiele1}
\begin{enumerate}
\item
If $R$ is a ring of stable rank $2$  then $\PP(R)$ is connected and
its diameter is $\leq 2$ \cite[Proposition 1.4.2]{herz-95}. In
particular, the diameter is $1$ exactly if $R$ is a field and it is
$0$ exactly if $R=\{0\}$.

As has been pointed out in \cite[(2.1)]{bart-89}, the points of the
projective line over a ring $R$ of stable rank $2$ are exactly the
submodules $R(t_2 t_1+1,t_2)$ of $R^2$ with $t_1,t_2\in R$. Clearly,
this is just a particular case of our more general result in Theorem
\ref{th:komponenten} (b).

Conversely, if an arbitrary ring $R$ satisfies (\ref{uni-admiss}) and
$\PP(R)$ is connected with diameter $\leq 2$, then $R$ is a ring of
stable rank $2$ \cite[Proposition 1.1.3]{herz-95}.

\item
The projective line over a (not necessarily commutative) Euclidean
ring $R$ is connected, since every Euclidean ring is a $\GE_2$-ring
\cite[Theorem 1.2.10]{hahn+om-89}.

\end{enumerate}
\end{exas}

Our next examples are given in the following theorem:

\begin{thm}\label{th:unendlichdim}
Let $U$ be an infinite-dimensional vector space over a field $K$ and
put $R:=\End_K(U)$. Then the projective line $\PP(R)$ over $R$ is
connected and has diameter $3$.
\end{thm}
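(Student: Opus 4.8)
The plan is to translate the entire problem into the geometry of subspaces of $U^2=U\oplus U$. Writing $\phi=(a\ b)\colon U^2\to U$, $(u,v)\mapsto au+bv$, the invertibility criterion recalled in Section~\ref{se:prelim} shows that $(a,b)$ is admissible precisely when $\phi$ is surjective with $\ker\phi\cong U$, and that two admissible pairs define the same point iff they have the same kernel. Thus $\PP(R)$ is in bijection with the set of subspaces $P\le U^2$ satisfying $P\cong U\cong U^2/P$, with $R(1,0)$ corresponding to $P_0:=0\oplus U$ and $R(0,1)$ to $U\oplus0$. A direct computation with the $2\times2$ matrix gives $R(a,b)\,\dis\,R(c,d)$ iff the two kernels are complementary, i.e.\ $U^2=\ker(a\ b)\oplus\ker(c\ d)$. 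Hence $\dist(P_0,P)$ is the length of a shortest chain of successively complementary subspaces from $P_0$ to $P$, and by the transitivity in Theorem~\ref{th:komponenten}(a) the diameter of each component equals $\sup_P\dist(P_0,P)$; an a priori bound $\dist(P_0,P)\le3$ for all $P$ therefore yields both connectedness and diameter $\le3$ at once.

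Next I would record the short-distance criteria. One has $\dist(P_0,P)\le1$ iff $P$ is complementary to $P_0$ (equivalently $b\in R^*$), and the complements of $P_0$ are exactly the graphs $\Gamma_f=\{(u,fu):u\in U\}$, $f\in R$. Likewise $\dist(P_0,P)\le2$ iff $P_0$ and $P$ admit a common complement; taking this complement to be a graph, the condition becomes the existence of $t\in R$ with $a-bt\in R^*$. For the lower bound I would exhibit a point at distance $3$ by taking $P$ to be a hyperplane of $P_0$: with $U$ spanned by $e_1,e_2,\dots$, set $a=S$ the shift $e_i\mapsto e_{i+1}$ and $b=\beta$ of rank one, $\beta(x)=\lambda(x)e_1$ for a fixed nonzero functional $\lambda$. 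Then $(S,\beta)$ is admissible and $\ker(S\ \beta)=0\oplus\ker\lambda\subset P_0$ has codimension $1$ in $P_0$. A hyperplane $P$ of $P_0$ can have no common complement $D$ with $P_0$: from $x\in P_0\setminus P$ and $x=d+k$ with $d\in D$, $k\in P\subseteq P_0$ one gets $d=x-k\in D\cap P_0=0$, so $x=k\in P$, a contradiction. Hence $\dist(P_0,P)\ge3$.

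The substance of the theorem is the uniform upper bound $\dist(P_0,P)\le3$, which simultaneously forces connectedness. Here I would fix an adapted decomposition $U^2=X\oplus Y\oplus Z\oplus W$ with $P_0=X\oplus Y$ and $P=X\oplus Z$, so that $X=P_0\cap P$ and $X\oplus Y\oplus Z=P_0+P$. Put $C:=Y\oplus W$, a complement of $P$, and look for a common complement $D$ of $P_0$ and $C$, which produces the length-$3$ chain $P_0\,\dis\,D\,\dis\,C\,\dis\,P$. In the generic position, where $\dim X=\dim W=\dim U$, one may fix an isomorphism $\theta\colon W\to X$ and take
\[
 D:=Z\oplus\{\,w+\theta(w):w\in W\,\};
\]
a short check confirms $U^2=P_0\oplus D=C\oplus D$, and that $P_0,D,C,P$ are all genuine points (each $\cong U$ with quotient $\cong U$). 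This is exactly the room afforded by $U$ being infinite-dimensional, and it is precisely what fails in finite dimensions, where any two equidimensional subspaces share a complement and the diameter would drop to $2$.

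The main obstacle I anticipate is making this last step uniform over all relative positions of $P_0$ and $P$. The four invariants $\dim X,\dim Y,\dim Z,\dim W$ are only constrained by having the appropriate pairwise sums equal to $\dim U$, and they need \emph{not} force $\dim X=\dim W$; the degenerate cases (for instance $P_0+P=U^2$ with $P_0\cap P$ large) must be handled either by a dual balancing of the blocks or by choosing a different complement $C$ of $P$ so that $(P_0,C)$ acquires matching gap-dimensions and hence a common complement. Establishing that such a $C$ can always be chosen — an infinite-dimensional common-complement statement — is the delicate core of the argument; once it is in place, the chain above gives $\dist(P_0,P)\le3$ for every $P$, and combining this with the hyperplane example of the second paragraph yields connectedness together with diameter exactly $3$.
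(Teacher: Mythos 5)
Your translation of the problem into the lattice of subspaces of $U^2$, the reduction to bounding $\dist(P_0,P)$ for a fixed base point, and the hyperplane example for the lower bound all agree in substance with the paper's proof (the paper works with images $\{(u^\alpha,u^\beta)\}$ rather than kernels, which is only a change of convention). The genuine problem is the uniform upper bound $\dist(P_0,P)\le 3$, and there your argument has a real gap --- one you flag yourself. Your chain $P_0\,\dis\,D\,\dis\,C\,\dis\,P$ with $C=Y\oplus W$ and $D=Z\oplus\{w+\theta(w):w\in W\}$ needs an isomorphism $\theta\colon W\to X$, hence $\dim X=\dim W$, and nothing in the decomposition $U^2=X\oplus Y\oplus Z\oplus W$ forces this: take $P_0\cap P=\{0\}$ with $P_0+P$ of codimension $1$, so that $\dim X=0\ne 1=\dim W$. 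You correctly isolate what is needed (a complement $C$ of $P$ with $P_0/(P_0\cap C)\cong C/(P_0\cap C)$), but you do not produce it; as written, neither connectedness nor the bound $3$ is established.

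The missing step is closed in the paper by a dichotomy on $Y$ and $Z$, which you could adopt directly. If $\dim Y=\dim Z$, take an isomorphism $\sigma\colon Y\to Z$ and put $\Gamma:=\{y+\sigma(y):y\in Y\}$; then $\Gamma$ is a common complement of $Y$ and $Z$ inside $Y\oplus Z$, so $\Gamma\oplus W$ is a common complement of $P_0$ and $P$ and $\dist(P_0,P)\le 2$ --- no length-$3$ chain is needed at all. If $\dim Y\ne\dim Z$, infinite cardinal arithmetic rescues your construction: from $\dim P_0=\max\{\dim X,\dim Y\}=\dim U$ and $\dim P=\max\{\dim X,\dim Z\}=\dim U$ one gets $\dim X=\dim U$ (otherwise $\dim Y=\dim U=\dim Z$), and from $\dim(Z\oplus W)=\dim(Y\oplus W)=\dim U$ for the complements of $P_0$ and $P$ one gets $\dim W=\dim U$ the same way. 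So precisely in the case where a common complement of $P_0$ and $P$ may fail to exist, your ``generic'' hypothesis $\dim X=\dim W$ holds automatically and the chain $P_0\,\dis\,D\,\dis\,C\,\dis\,P$ is available (one still has to check $C,D\in\cG$, which is immediate since each is a complement of a member of $\cG$). With this case distinction inserted, your proof is complete and is essentially the paper's.
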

\begin{proof}
We put $V:=U\times U$ and denote by $\cG$ those subspaces $W$ of $V$
that are isomorphic to $V/W$. By \cite[2.4]{blunck-99}, the mapping
\begin{equation}
  \Phi : \PP(R)\to \cG :
  R(\alpha,\beta)\mapsto\{(u^\alpha,u^\beta)\mid u\in U \}
\end{equation}
is bijective and two points of $\PP(R)$ are distant exactly if their
$\Phi$-images are complementary. By an abuse of notation, we shall
write $\dist(W_1,W_2)=n$, whenever $W_1,W_2$ are $\Phi$-images of
points at distance $n$, and $W_1\,\dis\,W_2$ to denote complementary
elements of $\cG$. As $V$ is infinite-dimensional, $2\dim W=\dim
V=\dim W$ for all $W\in\cG$.

We are going to verify the theorem in terms of $\cG$: So let
$W_1,W_2\in\cG$. Put $Y_{12}:=W_1\cap W_2$ and choose $Y_{23}\leq
W_2$ such that $W_2=Y_{12}\oplus Y_{23}$. Then $W_1\cap Y_{23}=\{0\}$
so that there is a $W_3\in\cG$ through $Y_{23}$ with $W_1\,\dis\,
W_3$. By the law of modularity,
\begin{displaymath}
  W_2\cap W_3 = (Y_{23}+Y_{12})\cap W_3 = Y_{23}+(Y_{12}\cap W_3)=Y_{23}.
\end{displaymath}
Finally, choose $Y_{14}\leq W_1$ with $W_1=Y_{12}\oplus Y_{14}$ and
$Y_{34}\leq W_3$ with $W_3=Y_{23}\oplus Y_{34}$. Hence we arrive at
the decomposition
\begin{equation}\label{dirsumme}
  V=Y_{14}\oplus Y_{12}\oplus Y_{23}\oplus Y_{34}.
\end{equation}
As $W_2\in\cG$, so is also $W_4:=Y_{14}\oplus Y_{34}$. Now there are
two possibilities:

Case 1: There exists a linear bijection $\sigma:Y_{14}\to Y_{23}$. We
define $Y:=\{v+v^\sigma\mid v\in Y_{14}\}$. Then $Y_{14}$, $Y_{23}$,
and $Y$ are easily seen to be mutually complementary subspaces of
$Y_{14}\oplus Y_{23}$. Therefore, from (\ref{dirsumme}),
\begin{equation}
  V=Y_{14}\oplus Y_{12}\oplus Y     \oplus Y_{34}
   =Y     \oplus Y_{12}\oplus Y_{23}\oplus Y_{34},
\end{equation}
i.e., $W_1\,\dis\,(Y\oplus Y_{34})\,\dis\,W_2$. So
$\dist(W_1,W_2)\leq 2$.

Case 2: $Y_{14}$ and $Y_{23}$ are not isomorphic. Then $\dim
Y_{12}=\dim W_1$, since otherwise $\dim Y_{12}<\dim W_1=\dim W_2$
together with well-known rules for the addition of infinite cardinal
numbers would imply
\begin{eqnarray}
  &&\dim W_1 = \max\{\dim Y_{12},\dim Y_{14}\} = \dim Y_{14},\nonumber\\
  &&\dim W_2 = \max\{\dim Y_{12},\dim Y_{23}\} = \dim Y_{23},\nonumber
\end{eqnarray}
a contradiction to $\dim Y_{14}\neq \dim Y_{23}$.

Likewise, it follows that $\dim Y_{34} =\dim W_3$. But this means
that $Y_{12}$ and $Y_{34}$ are isomorphic, whence the proof in case 1
can be modified accordingly to obtain a $Y\leq Y_{12}\oplus Y_{34}$
such that $W_1\,\dis\,W_3\,\dis\,(Y\oplus Y_{14})\,\dis\, W_2$. So
now $\dist(W_1,W_2)\leq 3$.

It remains to establish that in $\cG$ there are elements with
distance $3$: Choose any subspace $W_1\in\cG$ and a subspace $W_2\leq
W_1$ such that $W_1/W_2$ is $1$-dimensional. With the previously
introduced notations we get $Y_{12}=W_2$, $\dim Y_{14}=1$,
$Y_{23}=\{0\}$, $Y_{34}=W_3\in\cG$, and $W_4=Y_{14}\oplus W_3$. As
before, $V=W_2\oplus W_4$ and from $\dim W_2 = 1+\dim W_2=\dim W_1
=\dim W_3=1+\dim W_3 = \dim W_4$ we obtain $W_2,W_4\in\cG$. By
construction, $\dist (W_1,W_2)\neq 0,1$. Also, this distance cannot
be $2$, since $W\,\dis\,W_1$ implies $W+W_2\neq V$ for all $W\in\cG$.

This completes the proof.
\end{proof}

If $K$ is a proper skew field, then $K$ can be embedded in
$\End_K(U)$ in several ways  \cite[17]{blunck-00a}; each embedding
gives rise to a connected generalized chain geometry. (In
\cite{blunck-00a} this is just called a ``chain geometry''.) If $K$
is commutative, then $\End_K(U)$ is a $K$-algebra and $x \mapsto
x\,\id_U$ is a distinguished embedding of $K$ into the centre of
$\End_K(U)$. In this way an ordinary connected chain geometry arises;
cf.\ \cite[4.5.\ Example (4)]{herz-95}.

Our next goal is to show the existence of chain geometries with
connected components of infinite diameter.
\begin{rem}\label{eindeutigkeit}
If $R$ is an arbitrary ring then each matrix $A\in\GE_2(R)$ can be
expressed in {\em standard form\/}
\begin{equation}\label{standard-form}
  A=\diag(u,v)\cdot E(t_n)\cdot E(t_{n-1})\cdots E(t_1),
\end{equation}
where $u,v\in R^*$,  $t_1,t_n\in R$, $t_2,t_3,\ldots,t_{n-1}\in
R\setminus (R^*\cup\{0\})$, and $t_1,t_2\neq 0$ in case $n=2$
\cite[Theorem (2.2)]{cohn-66}. Since $E(0)^2=\diag(-1,-1)$, each
matrix $A\in\GE_2(R)$ can also be written in the form
(\ref{standard-form}) subject to the slightly modified conditions
$u,v\in R^*$, $t_1,t_n\in R$, $t_2,t_3,\ldots,t_{n-1}\in R\setminus
(R^*\cup\{0\})$, and $n\geq 1$. We call this a {\em modified standard
form\/} of $A$.

Suppose that there is a unique standard form for $\GE_2(R)$. For all
non-diagonal matrices in $\GE_2(R)$ the unique representation in
standard form is at the same time the unique representation in
modified standard form. Any diagonal matrix $A\in\GE_2(R)$ is already
expressed in standard form, but its unique modified standard form
reads $A=-A\cdot E(0)^2$. Therefore there is also a unique modified
standard form for $\GE_2(R)$.

By reversing these arguments it follows that the existence of a
unique modified standard form for $\GE_2(R)$ is equivalent to the
existence of a unique standard form for $\GE_2(R)$.
\end{rem}

\begin{thm}\label{th:dm-unendlich}
Let $R$ be a ring with a unique standard form for $\GE_2(R)$ and
suppose that $R$ is not a field. Then every connected component of
the projective line $\PP(R)$ over $R$ has infinite diameter.
\end{thm}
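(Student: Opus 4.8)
The plan is to reduce everything to the component $C_\infty$ and then to bound distances from below by means of a well-defined \emph{length} on $\GE_2(R)$, which is exactly what the uniqueness hypothesis supplies. By the remarks following Theorem~\ref{th:komponenten}, all connected components are isomorphic subgraphs of the distant graph, so it suffices to prove that $C_\infty$ has infinite diameter. By Theorem~\ref{th:komponenten}(b) and the discussion after it, $\dist(R(1,0),r)$ equals the least number $n$ of factors for which $r=R\bigl((1,0)\cdot E(t_n)\cdots E(t_1)\bigr)$. I shall produce, for every $n$, a point of $C_\infty$ whose distance from $R(1,0)$ is at least $n-2$.

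First I set up the length function. By the hypothesis together with Remark~\ref{eindeutigkeit}, every $A\in\GE_2(R)$ has a \emph{unique} modified standard form $\diag(u,v)\cdot E(t_m)\cdots E(t_1)$; write $\lambda(A):=m$ for the number of its elementary factors. The crucial feature is that the relations governing these products never increase this number: moving a diagonal past one factor, $E(x)\diag(\alpha,\beta)=\diag(\beta,\alpha)E(\beta^{-1}x\alpha)$, leaves the count unchanged, while eliminating an interior zero, $E(x)E(0)E(y)=\diag(-1,-1)E(x+y)$, and eliminating an interior unit (Cohn's corresponding identity, turning three factors into two) strictly decrease it. Hence any reduction of a word to modified standard form is non-increasing in the number of factors $E(\cdot)$; consequently, whenever $A$ is written as a diagonal matrix times a product of $m$ factors $E(\cdot)$, one has $\lambda(A)\le m$.

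Since $R$ is not a (skew)field, fix $t_0\in R\setminus(R^*\cup\{0\})$ and, for $n\ge 1$, consider $r_n:=R\bigl((1,0)\cdot E(t_0)^n\bigr)\in C_\infty$. All interior parameters of $E(t_0)^n$ equal the nonzero non-unit $t_0$, so $E(t_0)^n$ is already in modified standard form and $\lambda(E(t_0)^n)=n$ exactly. Suppose now that $\dist(R(1,0),r_n)=k$, so that $r_n=R\bigl((1,0)\cdot E(s_k)\cdots E(s_1)\bigr)$. Then the first rows of $E(t_0)^n$ and of $E(s_k)\cdots E(s_1)$ are left-proportional by a unit, and exactly as in the proof of Theorem~\ref{th:komponenten}(c) Lemma~\ref{le:matrizen} supplies a lower-triangular invertible matrix $L=\diag(w,u)\,B_{21}(c)\in\GE_2(R)$ with $E(t_0)^n=L\cdot E(s_k)\cdots E(s_1)$. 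Since $B_{21}(c)=\diag(-1,-1)E(0)E(c)$ adds at most two factors $E(\cdot)$ (and none when $c=0$), the right-hand side writes $E(t_0)^n$ as a diagonal matrix times a product of at most $k+2$ elementary factors; the length estimate then gives $n=\lambda(E(t_0)^n)\le k+2$, i.e.\ $\dist(R(1,0),r_n)\ge n-2$. As $n\to\infty$ the distances are unbounded, so $C_\infty$ — and hence every component — has infinite diameter.

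The one substantive point, and the place where the uniqueness hypothesis is indispensable, is the length estimate. A priori, reducing $L\cdot E(s_k)\cdots E(s_1)$ to standard form might cascade through the long run of factors $E(t_0)$ and cancel most of them, which would make $r_n$ lie close to $R(1,0)$; this is precisely what happens over $\ZZ$, a non-example. Uniqueness forecloses this, because it forces that reduction to terminate at $E(t_0)^n$ itself, whose length is manifestly $n$, so the cancellation can never bring the count below $n$. Thus the only thing that must be checked carefully is the book-keeping claim that the reduction relations never raise the number of elementary factors, which the three relation types above render transparent.
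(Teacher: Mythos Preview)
Your proof is correct, but it follows a genuinely different route from the paper's.

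The paper does not introduce a length function at all. Instead it argues inductively: writing $q_m:=R\bigl((1,0)E(t)^m\bigr)$, it sets $n-1:=\dist(q_0,q_{m-1})$, takes a \emph{geodesic} $q_0=p_0\dis p_1\dis\cdots\dis p_{n-1}=q_{m-1}$, and appends the natural step $p_n:=q_m$. The normalization from Theorem~\ref{th:komponenten}(b) then turns this path into a product $E(t_n)\cdots E(t_1)$, and the geodesic property forces $t_i\notin R^*\cup\{0\}$ for $2\le i\le n-1$ (since $p_i\dis p_{i-2}$ would mean $t_i\in R^*$ and $p_i=p_{i-2}$ would mean $t_i=0$). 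Thus the \emph{path itself} already yields a modified standard form, and comparing it with the obvious one $\diag(v,v')E(t)^m$ gives $n=m$ on the nose. No reduction argument is needed; uniqueness is invoked once, on two explicit standard forms.

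Your approach replaces this geometric trick by an algebraic length function $\lambda$, at the cost of having to prove that Cohn's reduction to standard form never increases the number of $E$-factors. That claim is true (the zero relation drops three factors to one, the unit relation $E(x)E(\alpha)E(y)=[\alpha^{-1},\alpha]\,E(\alpha x\alpha-\alpha)\,E(y-\alpha^{-1})$ drops three to two, and commuting diagonals is neutral), but you should state the unit identity explicitly rather than gesture at ``Cohn's corresponding identity'', since this is precisely the step where a reader would want to see that the count does not go up. Your method yields the slightly weaker bound $\dist(R(1,0),r_n)\ge n-2$, which of course suffices.

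In short: the paper's argument is more economical because the geodesic hypothesis manufactures a standard form for free, whereas your length-function approach is more conceptual but carries an extra lemma that deserves a full line of justification.
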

\begin{proof}
Since $R$ is not a field, there exists an element $t\in R\setminus
(R^*\cup\{0\})$. We put
\begin{equation}\label{def.q_m}
  q_m:=R(c_m,d_m) \mbox{ where } (c_m,d_m):=(1,0)\cdot E(t)^m
  \mbox{ for all } m\in\{0,1,\ldots\}.
\end{equation}
Next fix one $m\geq 1$, and put $n-1:=\dist(q_0,q_{m-1})\geq 0$.
Hence there exists a sequence
\begin{equation}
  p_0\,\dis\,p_1\,\dis\ldots\dis\,p_{n-1}\,\dis\,p_{n}
\end{equation}
such that $p_0=q_0$, $p_{n-1}=q_{m-1}$, and $p_n=q_m$. Now we proceed
as in the proof of Theorem \ref{th:komponenten} (b): First let
$p_i=R(a_i,b_i)$ and put $(x_{-1},y_{-1}):=(0,-1)$,
$(x_0,y_0):=(1,0)$. Then pairs $(x_i,y_i)\in R^2$ and matrices
$E(t_i)\in\E_2(R)$ are defined in such a way that $p_i=R(x_i,y_i)$
and that (\ref{rekursion}) holds for $i\in \{1,2\ldots,n\}$. It is
immediate from (\ref{rekursion}) that a point $p_{i}$, $i\geq 2$, is
distant from $p_{i-2}$ exactly if $t_{i}\in R^*$. Also, $p_{i}=
p_{i-2}$ holds if, and only if, $t_{i}=0$. We infer from
(\ref{rekursion}) and $\dist(p_i,p_{j})= {\mid i-j\mid}$ for all
$i,j\in\{0,1,\ldots,n-1\}$ that
\begin{equation}\label{erste.darstellung}
  \Mat2{x_{n}&y_{n}\\-x_{n-1}&-y_{n-1}}=
   E(t_{n})\cdot E(t_{n-1})\cdots E(t_1),
\end{equation}
where $t_i\in R\setminus (R^*\cup\{0\})$ for all
$i\in\{2,3,\ldots,n-1\}$. On the other hand, by (\ref{def.q_m}) and
$(c_{m-1},d_{m-1})=(0,-1)\cdot E(t)^m$, there are $v,v'\in R^*$ with
\begin{equation}\label{zweite.darstellung}
  \Mat2{x_{n}&y_{n}\\-x_{n-1}&-y_{n-1}}=\diag(v,v')\cdot
  E(t)^{m}.
\end{equation}
From Remark \ref{eindeutigkeit}, the modified standard forms
(\ref{erste.darstellung}) and (\ref{zweite.darstellung}) are
identical. Therefore, $n=m$, $\dist(q_0,q_{m-1})=m-1$, and the
diameter of the connected component of $q_0$ is infinite.

By Theorem \ref{th:komponenten} (a), all connected components of
$\PP(R)$ have infinite diameter.
\end{proof}

\begin{rem}\label{K-ring}
Let $R$ be a ring such that $R^*\cup\{0\}$ is a field, say $K$, and
suppose that we have a {\em degree function}, i.e.\ a function
$\deg:R\to\{-\infty\}\cup\{0,1,\ldots\}$ satisfying
\begin{eqnarray*}
 \deg a &=& -\infty\quad \mbox{ if, and only if, } a=0,\\
 \deg a &=& 0 \quad \mbox{ if, and only if, } a\in R^*,\\
 \deg (a+b)&\leq& \max\{\deg a,\deg b\},\\
 \deg (ab) &=& \deg(a)+\deg(b),
\end{eqnarray*}
for all $a,b\in R$. Then, following \cite[21]{cohn-66}, $R$ is called
a {\em $K$-ring with a degree function}.

If $R$ is a $K$-ring with a degree function, then there is a unique
standard form for $\GE_2(R)$ \cite[Theorem (7.1)]{cohn-66}.
\end{rem}

\begin{exas}\label{beispiele2}
\begin{enumerate}
\item
Let $R$ be a $K$-ring with a degree-function such that $R\neq K$.
From Remark \ref{K-ring} and Theorem \ref{th:dm-unendlich}, all
connected components of the projective line $\PP(R)$ have infinite
diameter.

The associated generalized chain geometry $\Sigma(K,R)$ has a lot of
strange properties. For example, {\em any two\/} distant points are
joined by a unique $K$-chain. However, we do not enter a detailed
discussion here.
\item
Let $K[X]$ be the polynomial ring over a field $K$ in a central
indeterminate $X$. From (a) and Example \ref{beispiele1} (b), the
projective line $\PP(K[X])$ is connected and its diameter is
infinite. On the other hand, if $K$ is commutative then $K[X]$ has
stable rank $3$ \cite[2.9]{veld-95}; see also \cite[Chapter V,
(3.5)]{bass-68}. So there does not seem to be an immediate connection
between stable rank and diameter.
\item
Let $R:=K[X_1,X_2,\ldots,X_m]$ be the polynomial ring over a field
$K$ in $m>1$ independent central indeterminates. Then, by an easy
induction and by \cite[Proposition (7.3)]{cohn-66},
\begin{equation}\label{xy-matrix}
   A_n:=\Mat2{1+X_1X_2&X_1^2\\-X_2^2&1-X_1X_2}^n =
  \Mat2{1+nX_1X_2&nX_1^2\\-nX_2^2&1-nX_1X_2}
\end{equation}
is in $\GL_2(R)\setminus\GE_2(R)$ for all $n\in\ZZ$ that are not
divisible by the characteristic of $K$. Also, the inner automorphism
of $\GL_2(R)$ arising from the matrix $A_1$ takes
$B_{12}(1)\in\E_2(R)$ to a matrix that is not even in $\GE_2(R)$; see
\cite[121--122]{silv-81}. So neither $\E_2(R)$ nor $\GE_2(R)$ is a
normal subgroup of $\GL_2(R)$.

We infer that the projective line over $R$ is not connected. Further,
it follows from (\ref{xy-matrix}) that the number of right cosets of
$\GE_2(R)$ in $\GL_2(R)$ is infinite, if the characteristic of $K$ is
zero, and $\geq\Char K$ otherwise. From Theorem \ref{th:komponenten},
this number of right cosets is at the same time the number of
connected components in $\PP(R)$. Even in case of $\Char K=2$ there
are at least three connected components, since the index of
$\GE_2(R)$ in $\GL_2(R)$ cannot be two. From (a), all connected
components of $\PP(R)$ have infinite diameter.

So, for each commutative field $K$, we obtain a disconnected chain
geometry $\Sigma(K,R)$, whereas for each skew field $K$ a
disconnected generalized chain geometry arises.
\end{enumerate}
\end{exas}

Acknowledgement. The authors are obliged to the referee for pointing
out the paper \cite{ojan+s-71}.


Institut f\"ur Geometrie\\ Technische Universit\"at\\ Wiedner
Hauptstra{\ss}e 8--10\\ A--1040 Wien\\Austria
\end{document}